\newcommand{\R}{\mathbb{R}}
\newcommand{\Z}{\mathbb{Z}}
\newcommand{\N}{\mathbb{N}}
\newcommand{\C}{\mathbb{C}}
\newcommand{\p}[1]{\left(#1\right)}
\newcommand{\Span}[1]{\mathrm{Span}\left\lbrace #1\right\rbrace}
\theoremstyle{plain}
\newtheorem{theorem}{Theorem}[section]
\newtheorem{proposition}[theorem]{Proposition}
\newtheorem{lemma}[theorem]{Lemma}
\newtheorem{introtheorem}{Theorem}
\theoremstyle{definition}
\newtheorem{rem}[theorem]{Remark}
\newcommand{\co}{\leq_{\mathrm{c.o.}}}
\newcommand{\charfunc}[1]{{\mathds{1}}_{#1}}
\newcommand{\Mat}{{\mathrm{M}}}
\DeclareMathOperator{\Jac}{Jac}
\DeclareMathOperator{\End}{End}
\newcommand{\calH}{{\mathcal{H}}}
\newcommand{\calB}{{\mathcal{B}}}
\newcommand{\calM}{{\mathcal{M}}}
\title[On Uniform Admissibility]{On Uniform Admissibility of Unitary and Smooth Representations}
\author{Uriya A. First}
\email{uriya.first@gmail.com}
\address{Department of Mathematics\\University of Haifa\\ Mount Carmel, Haifa, Israel 31905}
\author{Thomas R\"{u}d}
\email{thomas@math.ubc.ca}
\address{Department of Mathematics\\University of British Columbia\\ BC, Canada V6T 1Z2}
\begin{document}

\maketitle


Let $G$ be a  reductive algebraic group over a non-Archimedean local field $F$,
or more precisely, its group of $F$-points.
The study of complex representations of $G$ often
divides into investigating  two classes of representations:  
unitary representations, i.e.\ representations
on Hilbert spaces via unitary operators, and smooth representations, i.e.\
representation on an abstract $\C$-vector space such that the stabilizers of all
vectors are   open. 
Historically, the unitary representations served as the original object of interest,
and the smooth representations were introduced in  as a tool to study them.
Interplay between the  two types of representations is abundant 
and is now part of the standard theory.

Building on the work of Harish-Chandra \cite{HarishChandra},
Bernstein \cite{BernsteinPaperOriginal} proved that 
the irreducible unitary representations of $G$
are uniformly admissible.
That is to say, for any compact open subgroup  $K\leq G$, there exists an integer
$n=n(K,G)$ such that the dimension of the $K$-fixed vectors in any irreducible
unitary 
representation of $G$ is at most $n$.
Here, ``irreducible'' should be interpreted in the   category of unitary
representations,
namely, the irreducible unitary  representations 
are those representations admitting no proper \emph{closed}  subrepresentations other than~$0$.
Bernstein later showed   that the irreducible smooth
representations of $G$ are  uniformaly admissible as well;
here, ``irreducible'' means irreducible as a complex $G$-module.
A standard source for the latter result  is Section~II.2.2 
in Bernstein's lecture notes
\cite{BernsteinNotes}.
Both  results are fundamental in the theory of reductive $p$-adic Lie groups.

It was noted by  
Bernstein and Zelevinsky in \cite{BernZel76}
that by combining \cite[\S2.5, Prp.~2.12]{BernZel76}
with a modification of results by Godement
\cite[Lem.~4]{Godement52}, one can formally
derive uniform admissiblity of the irreducible
unitary representations of $G$ from uniform admissiblity of 
its irreducible smooth representations. 
The purpose of this note is to extend this to arbitrary $\ell$-groups, 
provide a converse, and give self-contained proofs in the process.
We show:

%
%
%

%
%

\begin{introtheorem}\label{TH:A}
	Let $G$ be a  totally disconnected locally compact group such
	that $|G/K|<{2^{\aleph_0}}$ for some compact open subgroup $K\leq G$. Then
	the following conditions are equivalent:
	\begin{enumerate}
		\item[(a)] The irreducible unitary representations of $G$
		are uniformly admissible.
		\item[(b)] The irreducible smooth representations of $G$
		are uniformly admissible.
	\end{enumerate}
	In fact, (b) implies (a) without the assumption $|G/K|<{2^{\aleph_0}}$.
\end{introtheorem}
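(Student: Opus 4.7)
The common device is the Hecke algebra $A := \calH(G,K) = C_c(K\backslash G/K)$, the convolution algebra of $K$-biinvariant compactly supported $\C$-valued functions on $G$; for any smooth or unitary $G$-representation on a space $V$, the subspace $V^K$ of $K$-fixed vectors is naturally an $A$-module. By the Bernstein--Zelevinsky correspondence \cite[\S 2.5, Prop.~2.12]{BernZel76}, simple $A$-modules are classified by isomorphism classes of irreducible smooth $G$-representations $V$ with $V^K\neq 0$, via $V \mapsto V^K$. Thus hypothesis (b) is equivalent to asserting that every simple $A$-module has dimension bounded by some constant $n(K)$ depending only on $K$.

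\emph{Direction $(b) \Rightarrow (a)$.} For an irreducible unitary representation $(\pi, H)$, the plan is to show that $H^K$ is algebraically simple as an $A$-module, adapting Lemma~4 of \cite{Godement52} to the $\ell$-group setting. The starting observation is topological simplicity: any nonzero $A$-submodule $W \subseteq H^K$ has Hilbert closure $\overline{W}=H^K$, because the closed $G$-invariant subspace of $H$ generated by $W$ is all of $H$ by unitary irreducibility, and applying the projection $\pi(e_K)$ onto $H^K$ together with the observation $\pi(e_K)\pi(G)W \subseteq W$ (which holds because $e_K * \delta_g * e_K \in A$ for all $g \in G$ and $W$ is $A$-stable) forces $H^K \subseteq \overline{W}$. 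The harder Godement-type step is to upgrade density to algebraic equality $W=H^K$, which relies on a careful $*$-algebra argument exploiting the unitary structure. Once algebraic simplicity is in hand, hypothesis (b) combined with the Bernstein--Zelevinsky correspondence yields $\dim H^K \leq n(K)$.

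\emph{Direction $(a) \Rightarrow (b)$.} Under $|G/K| < 2^{\aleph_0}$, one must reverse the construction. Starting from an irreducible smooth $V$ with $\dim V^K = d$ large, the goal is to produce an irreducible unitary $H$ with $\dim H^K \geq d$. The $A$-module $V^K$ is algebraically simple of dimension $d$ but need not be unitarizable, so a direct GNS construction on $V^K$ itself is unavailable. The plan is to choose a suitable $*$-positive linear functional on $A$ related to $V^K$ (for example, arising from a pairing of $V^K$ against its contragredient), apply the GNS construction to obtain a cyclic unitary $G$-representation, decompose this as a direct integral of irreducible unitary representations, and show that the decomposition must contain an irreducible piece whose $K$-fixed subspace has dimension at least $d$. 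The cardinality hypothesis $|G/K| < 2^{\aleph_0}$ controls $\dim_{\C} A < 2^{\aleph_0}$, placing the constructed Hilbert spaces in a separable (or near-separable) regime where direct-integral decomposition and measurable-selection arguments operate.

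The main obstacle I anticipate is in the $(a) \Rightarrow (b)$ step: producing a $*$-positive functional on $A$ whose associated GNS representation and direct-integral decomposition faithfully reflect the dimension of $V^K$, given that $V^K$ itself need not be unitarizable. The $(b) \Rightarrow (a)$ direction is more routine, the principal technical point being the Godement-style upgrade from topological to algebraic simplicity of $H^K$.
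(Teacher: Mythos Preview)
Your proposal and the paper take genuinely different routes. The paper's proof is purely ring-theoretic, pivoting on polynomial identities: writing $A=\calH(G,K)$, it shows that each of (a) and (b) is equivalent to the single algebraic condition that $A/\Jac(A)$ satisfies the standard identity $S_{2n}$ of Amitsur--Levitzki. The equivalence of (b) with this PI condition uses the Jacobson density theorem together with Schur's lemma in the form $\End_A(M)=\C$ (this is exactly where $\dim_\C A<2^{\aleph_0}$ enters). The equivalence of (a) with the PI condition uses the von Neumann density theorem, the fact that $\Jac(A)$ annihilates every unitary representation, and the existence of a faithful unitary $A$-representation on $L^2(K\backslash G)$. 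No GNS construction, no direct integrals, and no Godement-type ``upgrade from topological to algebraic simplicity'' appear.

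For $(b)\Rightarrow(a)$ your Godement/Bernstein--Zelevinsky outline is the classical route the paper itself cites in the introduction, so it is viable, though you have not actually supplied the $*$-algebra argument that promotes density of $W$ in $H^K$ to equality; that step is the whole content and cannot be waved at. The paper's PI argument sidesteps it entirely: one never proves $H^K$ is algebraically simple, only that $\calB(H^K)$ inherits $S_{2n}$ from the dense image of $A/\Jac(A)$, forcing $\dim H^K\le n$.

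For $(a)\Rightarrow(b)$ your plan has a real gap, which you correctly flag: producing a $*$-positive functional on $A$ whose GNS representation, after direct-integral decomposition, contains an irreducible summand with $K$-fixed space of dimension at least $\dim V^K$ is not straightforward when $V^K$ is non-unitarizable, and nothing in your sketch indicates how to do it. The cardinality bound does not by itself make the relevant Hilbert spaces separable, so invoking direct-integral theory is also delicate. The paper's argument avoids all of this: from (a) one gets a bound on irreducible unitary $A$-modules, embeds $A/\Jac(A)$ into a product of matrix algebras $\calB(V_a)$ of bounded size (one for each nonzero $a\in A$, using that $A$ acts faithfully on $L^2(K\backslash G)$), reads off the identity $S_{2n}$, and then descends to simple $A$-modules via Jacobson density. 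This is where the PI machinery really pays off.
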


The assumption   $|G/K|<{2^{\aleph_0}}$ is necessary in general; see Remark~\ref{RM:counterexamples}.
The proof makes use of polynomial identities, a technique which
goes back at least as far as Kaplansky \cite{Kaplansky49}.

Theorem~\ref{TH:A} was applied in \cite[\S4.2]{Solleveld17} to show that
irreducible unitary representations of  \emph{quasi-reductive} groups
over a non-Archimedean local field are uniformly admissible.

\medskip

In addition to Theorem~\ref{TH:A}, we   show, separately, that uniform admissibility of the  
irreducible smooth representations is  invariant  under passing to a subgroup or
an overgroup of finite index.

\begin{introtheorem}\label{TH:B}
	Let $G$ be a  totally disconnected locally compact group
	and let $H\leq G$ be an open subgroup of finite index. Then
	the following conditions are equivalent:
	\begin{enumerate}
		\item[(a)] The irreducible smooth representations of $G$
		are uniformly admissible.
		\item[(b)] The irreducible smooth representations of $H$
		are uniformly admissible.
	\end{enumerate}
	This statement also holds upon replacing ``uniformly admissible''
	with ``admissible''. 
\end{introtheorem}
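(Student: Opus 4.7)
The plan is to transfer (uniform) admissibility along $H \leq G$ using restriction and induction. Since $[G:H] < \infty$, these functors preserve finite length with controlled bounds, and taking $K$-invariants is exact on smooth representations, so a bound on irreducible subquotients on one side will convert into a bound on the other.

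The argument will rest on three inputs. First, for a compact open $K \leq G$, the functor $V \mapsto V^K$ is exact on smooth representations; for $K \leq H$ it produces the same space whether one views $V$ as a $G$-module or as an $H$-module, and for arbitrary $K$ one has $\dim V^K \leq \dim V^{K \cap H}$, so after replacing $K$ by $K \cap H$ we may always assume $K \leq H$. Second, induction $\mathrm{Ind}_H^G$ (which coincides with compact induction in finite index) is exact, sends a smooth module of length $\ell$ to one of length at most $[G:H] \cdot \ell$, and via the Mackey decomposition of $K$-invariants exhibits $W^K$ as the direct summand of $(\mathrm{Ind}_H^G W)^K$ indexed by the trivial double coset. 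Third, a Clifford-type length bound: the restriction of an irreducible smooth $G$-representation to $H$ has length at most some constant $c(G,H)$ depending only on $[G:H]$.

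Given these, the two implications are symmetric. For (a) $\Rightarrow$ (b), take an irreducible smooth $H$-representation $W$ and compact open $K \leq H$, and set $V := \mathrm{Ind}_H^G W$. By hypothesis, each of the at most $[G:H]$ irreducible $G$-subquotients $V_i$ of $V$ satisfies $\dim V_i^K \leq n_G(K)$, so by exactness of $(-)^K$ we get $\dim W^K \leq \dim V^K \leq [G:H] \cdot n_G(K)$. For (b) $\Rightarrow$ (a), take an irreducible smooth $G$-representation $V$ and compact open $K \leq H$; the restriction $V|_H$ has length at most $c(G,H)$ with irreducible smooth $H$-subquotients $W_j$, each bounded by $\dim W_j^K \leq n_H(K)$, so $\dim V^K = \sum_j \dim W_j^K \leq c(G,H) \cdot n_H(K)$.

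The main obstacle is the Clifford-type length bound. I expect to establish it by passing to the normal core $N := \bigcap_{g \in G} g H g^{-1}$ of $H$, which is open, normal, and of finite index in $G$ and contained in $H$. For such a normal $N$, the Clifford argument shows that any nonzero $N$-submodule of $V$ has its $G$-translates summing to $V$ as $N$-submodules, forcing $\mathrm{length}_N(V|_N)$ to be bounded by a function of $[G:N]$; since every $H$-submodule of $V$ is in particular an $N$-submodule, $\mathrm{length}_H(V|_H) \leq \mathrm{length}_N(V|_N)$, giving the required bound. The ``admissibility (not uniform)'' part of the theorem follows from the same arguments with each uniform bound replaced by the word ``finite-dimensional.''
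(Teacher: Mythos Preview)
Your strategy matches the paper's: use $\mathrm{Ind}_H^G$ for (a)$\Rightarrow$(b) and a Clifford-type restriction bound for (b)$\Rightarrow$(a), with passage to the normal core $N$ as the key reduction. The paper performs that reduction to normal $H$ at the very start, for \emph{both} directions; this is what makes the induction length bound immediate, since in the normal case $\mathrm{Res}_H\mathrm{Ind}_H^G W \cong \bigoplus_{i=1}^{[G:H]} {}^{g_i}W$ with each summand irreducible over $H$, so the $G$-length is at most $[G:H]$. Your assertion that induction multiplies length by at most $[G:H]$ for arbitrary (non-normal) $H$ is not obvious without first reducing to the normal core, so you should front-load that reduction as the paper does.

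There is a genuine gap in the Clifford step. You argue that for any nonzero $N$-submodule $W\subseteq V$ the translates $g_iW$ sum to $V$, and say this ``forces $\mathrm{length}_N(V|_N)$ to be bounded''. But that implication only yields $\mathrm{length}_N(V)\leq [G:N]\cdot\mathrm{length}_N(W)$, which is useless unless $W$ is \emph{simple}; and a finitely generated module need not have simple submodules, only simple quotients. The paper runs the dual argument: since $V$ is cyclic over $G$ it is finitely generated over $N$, so one may choose a \emph{maximal} $N$-submodule $M_0$; its $G$-orbit $\{M_1,\dots,M_m\}$ has $m\leq[G:N]$ elements, and $\bigcap_i M_i$ is a proper $G$-submodule, hence zero by irreducibility, giving an embedding $V\hookrightarrow\bigoplus_i V/M_i$ of $N$-modules. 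This both bounds $\dim V^K$ by $\sum_i\dim(V/M_i)^K$ and (a posteriori) shows $\mathrm{length}_N(V|_N)\leq[G:N]$. With this correction your proof coincides with the paper's.
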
 

We do not know whether Theorem~\ref{TH:B} holds under the milder
assumption that $H$ is a group such that there exists
a continuous group homomorphism $H\to G$ with finite kernel and cokernel.

%
%

\medskip

	The paper is organized in three sections, the first of which   is preliminary, whereas
	the second and the third are concerned with proving Theorems~\ref{TH:A} and~\ref{TH:B},
	respectively.
	
\medskip

	We thank Maarten Solleveld for  encouraging us to publish this note.

\section{Preliminaries}

We  begin with recalling some necessary facts and setting notation for the sequel.
Throughout, all vector spaces are over $\C$, and an algebra
means a unital associative $\C$-algebra.
%


\subsection{{$\boldsymbol\ell$}-Groups} 
 
 Throughout this paper, $G$ denotes a  locally compact totally disconnected group, or an
 $\ell$-group for short,
 and $\mu$
 is a fixed left-invariant  measure on $G$. 
 The modular character of $G$ is denoted
 $\delta:G\to \R_{>0}$; it 
 is determined by the identity $\mu(Sg)=\mu(S)\delta(g)$, holding for any compact
 $S\subseteq G$ and $g\in G$.

 We write $K\co G$ to denote that $K$ is a compact open subgroup of $G$. 
 It is well known that such subgroups
 form a basis of neighborhoods to the identity element $1_G$; see, for instance, \cite[\S2.3]{MontZipp55_groups}.

\medskip

	As usual,
	a unitary representation of $G$
	consists of a complex Hilbert space $V$ endowed with a continuous
	$G$-action $G\times V\to V$ such that $\langle gu,gv\rangle = \langle u,v\rangle$
	for all $u,v\in V$, $g\in G$. We say that $V$ is topologically irreducible,
	or just irreducible, if $V$ has no proper nonzero closed
	$G$-submodules.
	
	A smooth representation of $G$
	is an abstract complex vector space $V$ together with a
	$G$-action $G\times V\to V$ such that the $G$-stabilizer of every
	$v\in V$ is open, or equivalently, $V=\bigcup_{K\co G}V^K$,
	where $V^K:=\{v\in V\,:\,\text{$gv=v$ for all $g\in K$}\}$.
	We say that $V$ is algebraically irreducible, or just irreducible,
	if $V$ is contains no proper nonzero $G$-modules.
	

\subsection{Unital {$\boldsymbol*$}-Algebras}

	By a (unital) $*$-algebra, we mean a  unital associative  $\C$-algebra
	$A$ together with an involution $*:A\to A$, i.e., 
	an additive map satisfying
	$a^{**}=a$, $(ab)^*=b^*a^*$
	and   
	$(\alpha a)^*=\overline{\alpha} a^*$ for all $a\in A$, $\alpha\in \C$.
	
	A unitary representation of a $*$-algebra $A$
	is a Hilbert space $V$ endowed with an $A$-module
	structure such that $\langle au,v\rangle=\langle u,a^*v\rangle$
	for all $a\in A$, $u,v\in V$. A unitary representation 
	is topologically irreducible, or just irreducible, if
	it has no proper nonzero closed $A$-submodules.
	
\medskip

	We shall need the following versions of Schur's lemma.
	
	\begin{proposition}\label{LM:Schur-smooth}
		Let $A$ be a $*$-algebra with $\dim_{\C} A<{2^{\aleph_0}}$
		and let $M$ be a simple left $A$-module.
		Then $\End_A(M)=\C$.
	\end{proposition}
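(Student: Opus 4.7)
The plan is to run the classical Dixmier–Schur cardinality argument: the $*$-structure plays no role, only the dimension bound matters.

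First I would invoke the usual form of Schur's lemma to conclude that $D:=\End_A(M)$ is a division algebra over $\C$. The goal is then to rule out any element $d\in D\setminus \C$.

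The key dimension bookkeeping has two steps. Fix any nonzero $m\in M$. Since $M$ is simple, the map $A\to M$, $a\mapsto am$, is surjective, so $\dim_{\C}M\leq \dim_{\C}A<2^{\aleph_0}$. Next, the map $D\to M$, $\varphi\mapsto \varphi(m)$, is injective: if $\varphi(m)=0$ then $\varphi(Am)=A\varphi(m)=0$, but $Am=M$, so $\varphi=0$. Hence also $\dim_{\C}D<2^{\aleph_0}$.

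Now suppose for contradiction that some $d\in D$ is not in $\C$. Because $\C$ is algebraically closed, $d$ cannot be algebraic over $\C$ (else it would lie in $\C$), so $d$ is transcendental, and the subalgebra $\C(d)\subseteq D$ is isomorphic to the rational function field $\C(t)$. I would then show that the family $\{(d-\lambda)^{-1}\}_{\lambda\in\C}$ is $\C$-linearly independent: any relation $\sum_{i=1}^n c_i (d-\lambda_i)^{-1}=0$ with distinct $\lambda_i$ yields, after multiplying by $\prod_j(d-\lambda_j)$, the polynomial identity $\sum_i c_i \prod_{j\neq i}(d-\lambda_j)=0$ in $\C[d]\cong \C[t]$; specializing $t\mapsto \lambda_k$ forces $c_k=0$ for every $k$. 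Thus $\dim_{\C}D\geq |\C|=2^{\aleph_0}$, contradicting the bound above.

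The only mildly subtle step is the linear independence of the $(d-\lambda)^{-1}$, but the partial-fractions trick above handles it cleanly; everything else is routine. Note that the hypothesis that $A$ be a $*$-algebra is unused, so the same proof works for an arbitrary associative $\C$-algebra $A$ of dimension less than $2^{\aleph_0}$.
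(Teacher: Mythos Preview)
Your proof is correct and follows essentially the same approach as the paper: both bound $\dim_{\C}\End_A(M)\leq\dim_{\C}M\leq\dim_{\C}A<2^{\aleph_0}$ via evaluation at a fixed nonzero $m\in M$, and then conclude that the division algebra $\End_A(M)$ must equal $\C$. The only difference is that the paper simply asserts the last step as ``well known'' (a division algebra over $\C$ of dimension $<|\C|$ is algebraic, hence equal to $\C$), whereas you spell it out via the linear independence of $\{(d-\lambda)^{-1}\}_{\lambda\in\C}$.
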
	
	
	\begin{proof}
		Fix some $0\neq m\in M$.
		Then $Am=M$ and hence any element
		$f\in \End_A(M)$ is uniquely determined by $f(m)$.
		As a result, $\dim_{\C}\End_A(M)\leq \dim_{\C} M\leq \dim_{\C}A<{2^{\aleph_0}}$.
		Thus, $\End_A(M)$ is a division algebra over $\C$ of $\C$-dimension $<|\C|$.
		It is well known that such algebras are algebraic over $\C$,
		and hence coincide with $\C$. We conclude that $\End_A(M)=\C$.
	\end{proof}	
	
	\begin{proposition}\label{LM:Schur-unitary}
		Let $A$ be a $*$-algebra  
		and let $V$ be an irreducible unitary
		representation.
		Then all continuous $A$-endomorphisms of $V$
		are    scalar.
	\end{proposition}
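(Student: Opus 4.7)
The plan is to adapt the classical spectral-theoretic proof of Schur's lemma for unitary $C^*$-algebra representations to this slightly more general setting. Given a continuous $A$-endomorphism $T\colon V \to V$, I would aim to show $T = \lambda\cdot \mathrm{id}_V$ for some $\lambda \in \C$ by reducing to the case of bounded self-adjoint $A$-endomorphisms and then invoking the spectral theorem.

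The first step is to establish that every element of $A$ acts on $V$ by a \emph{bounded} operator, which is not part of the hypotheses. This follows from a Hellinger--Toeplitz argument: since both $a$ and $a^*$ are everywhere defined on $V$ and satisfy $\langle au, v\rangle = \langle u, a^* v\rangle$, the operator $a$ has a closed graph, so by the closed graph theorem it is bounded. Thus the image of $A$ in $B(V)$ is a unital $*$-subalgebra $\mathcal{A}\subseteq B(V)$.

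Next, I would verify that the Hilbert-space adjoint $T^*$ also commutes with $A$. A short computation using $\langle au, v\rangle = \langle u, a^*v\rangle$ together with $Ta^* = a^*T$ gives $\langle T^* au, v\rangle = \langle aT^* u, v\rangle$ for all $u,v \in V$, so $T^*a = aT^*$. Writing $T = T_1 + iT_2$ with $T_1,T_2$ bounded self-adjoint members of $\End_A(V)$, it suffices to show each $T_j$ is a real scalar. For this I would use that $\mathcal{A}'\subseteq B(V)$ is a von Neumann algebra (because $\mathcal{A}$ is $*$-closed) that contains $T_j$, and hence absorbs the Borel functional calculus of $T_j$; in particular, every spectral projection $E_B$ of $T_j$ lies in $\mathcal{A}'$. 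Each $E_B$ is then an orthogonal projection onto a closed $A$-invariant subspace of $V$, so by topological irreducibility $E_B \in \{0,\mathrm{id}_V\}$ for every Borel set $B\subseteq \R$. This collapses the spectral measure of $T_j$ to a point mass and gives $T_j = \lambda_j\cdot \mathrm{id}_V$ with $\lambda_j\in\R$, whence $T = (\lambda_1 + i\lambda_2)\cdot \mathrm{id}_V$.

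The main obstacle I anticipate is the boundedness step: for a general (non-$C^*$) $*$-algebra, elements need not act by bounded operators, and without boundedness one cannot plug $\mathcal{A}$ directly into the von Neumann algebra / spectral theorem framework. Hellinger--Toeplitz removes this obstacle cleanly, after which the remainder of the argument follows the familiar $C^*$-algebraic template.
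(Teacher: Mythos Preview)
Your argument is correct. The paper does not supply its own proof of this proposition but simply cites \cite[Thm.~9.6.1]{Palmer_algebra}; the spectral-theoretic argument you outline (Hellinger--Toeplitz to get boundedness of the $A$-action, passage to self-adjoint parts via $T^*\in\mathcal{A}'$, and collapse of the spectral measure using irreducibility) is precisely the standard proof one finds in such references, so there is nothing substantive to compare.
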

	
	\begin{proof}
		See, for instance, \cite[Thm.~9.6.1]{Palmer_algebra}
	\end{proof}
	
\subsection{The Relative Hecke Algebra}
\label{subsec:rel-Hecke-alg}
	
	The most important example of a $*$-algebra that we shall consider
	is   the relative Hecke algebra $\calH(G,K)$
	associated to a compact open subgroup $K\co G$.
	Recall that the underlying vector space
	of $\calH(G,K)$ consists of
	the bi-$K$-invariant compactly supported functions
	$f:G\to \C$
	and that its multiplication is given by convolution
	\[(f\star g)(y):= \int_{x\in G}f(x)g(x^{-1}y)\ \text{d}\mu  \ .\] 
	The unity of $\mathcal{H}(G,K)$
	is $e_K:=\mu(K)^{-1}\mathds{1}_K$, where $\mathds{1}_K:G\to \C$
	denotes the characteristic function of $K$.
	We make $\calH(G,K)$ into a $*$-algebra
	by setting
	\[
	f^*(g)=\delta(g)\overline{f(g^{-1})}
	\]
	for all $f\in\calH(G,K)$ and $g\in G$ (recall that $\delta$ is the modular
	character of $G$).

\medskip

	Given a unitary or smooth representation $V$
	of $G$, the space $V^K$ carries
	a left $\calH(G,K)$-module structure
	given by
	\[
	f\cdot v:=\int_{g\in G}f(g)\cdot gv\,\mathrm{d}\mu 
	\qquad\forall\, f\in \calH(G,K),\,v\in V^K	
	\]
	Note that in the smooth case, if we write
	$f=\sum_{i} \alpha_i\mu(K)^{-1}\charfunc{g_iK}$, then the integral is 
	just the finite sum $\sum_i\alpha_ig_iv$.
	
	When $V$ is a unitary representation, 
	we further have
	$\langle fu,v\rangle = \langle u,f^*v\rangle$
	for all $u,v\in V$ and $f\in\calH(G,K)$, and so $V^K$
	becomes  a unitary representation of $\mathcal{H}(G,K)$.
	We remark that the equality
	$\langle fu,v\rangle = \langle u,f^*v\rangle$ follows
	easily from the definitions and the identity
	$\int_{x\in G}f(x^{-1})\delta(x)\mathrm{d}\mu =\int_{x\in G}f(x)\mathrm{d}\mu$,
	which holds whenever the integrals exist.
	
%
%
%
%

\medskip

	We shall need the following two easy propositions.

%

\begin{proposition} \label{PR:GtoHecke}
Let $V$ be an irreducible smooth representation of $G$ and let $K\co G$.
Then either $V^K=0$, or $V^K$ is a simple $\mathcal{H}(G,K)$-module.
\end{proposition}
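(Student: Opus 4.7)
The plan is to reduce simplicity of $V^K$ as an $\calH(G,K)$-module to irreducibility of $V$ as a $G$-module, using the idempotent $e_K=\mu(K)^{-1}\charfunc{K}$ as a projector. Concretely, suppose $V^K\neq 0$ and let $W\subseteq V^K$ be any nonzero $\calH(G,K)$-submodule; I want to show that $W=V^K$.

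The first ingredient is that $e_K$ acts on any smooth representation $V$ --- not just on $V^K$ --- by the formula $e_K\cdot v=\mu(K)^{-1}\int_K hv\,\mathrm d\mu$, which is a finite sum because the stabilizer of $v$ in $K$ is open. The image of this action lies in $V^K$, and $e_K\cdot v=v$ for $v\in V^K$; so $e_K$ is the projector $V\twoheadrightarrow V^K$.

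The second, key ingredient is that for every $g\in G$ and every $w\in V^K$, one has $e_K\cdot gw\in\calH(G,K)\cdot w$. This is proved by an explicit computation: take $f_g:=\mu(K)^{-1}\charfunc{KgK}\in\calH(G,K)$, decompose $KgK=\bigsqcup_j k_jgK$ as a finite disjoint union, and use $kw=w$ for $k\in K$ together with left-invariance of $\mu$ to evaluate both $f_g\cdot w$ and $e_K\cdot gw$ as positive scalar multiples of the same sum $\sum_{h\in K/\mathrm{Stab}_K(gw)}hgw$. Hence $e_K\cdot gw$ is a nonzero scalar times $f_g\cdot w$.

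With these in hand, pick any $0\neq w\in W$. Since $V$ is irreducible as a $G$-module and $w\neq 0$, the $G$-submodule generated by $w$ is all of $V$, so any $v\in V^K$ can be written as a finite sum $v=\sum_i\alpha_ig_iw$. Applying $e_K$ and using the two ingredients above yields
\[
v\;=\;e_K\cdot v\;=\;\sum_i\alpha_i\,(e_K\cdot g_iw)\;\in\;\calH(G,K)\cdot w\;\subseteq\;W,
\]
which proves $V^K\subseteq W$ and hence $W=V^K$. I expect the main technical hurdle to be the bookkeeping in the key computation, since the stabilizer of $gw$ in $K$ may strictly contain $K\cap gKg^{-1}$ and so the sum over cosets $k_jgK$ decomposes with multiplicities; however, those multiplicities appear identically on both sides of the comparison between $f_g\cdot w$ and $e_K\cdot gw$, so they cancel, leaving the clean conclusion needed for the argument.
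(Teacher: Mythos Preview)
Your proof is correct and follows essentially the same route as the paper: reduce to showing $e_K\cdot gw\in\calH(G,K)\cdot w$ for each $g\in G$, then invoke irreducibility of $V$. The only cosmetic difference is that the paper obtains this membership by observing that $e_K\star\bigl(\mu(K)^{-1}\charfunc{gK}\bigr)\in\calH(G,K)$ acts as $v\mapsto e_K\cdot gv$ (via associativity of convolution), whereas you compute directly with $\mu(K)^{-1}\charfunc{KgK}$ --- but these two functions coincide up to a positive scalar, so the arguments are really the same.
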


\begin{proof}
We need to show that $\mathcal{H}(G,K)\cdot v= V^K$ for all $0\neq v\in V^K$.
Let $u\in V^K$. Since $V$ is irreducible,
there are $g_1,\dots,g_n\in G$ and $\alpha_1,\dots,\alpha_n\in \C$
such that 
$\sum_i\alpha_ig_iv=u$.
Let $f=e_K\star (\sum_i \alpha_i\mu(K)^{-1}\charfunc{g_iK})\in \mathcal{H}(G,K)$.
Then
\[f\cdot v=e_K\cdot (\sum_i\alpha_i\mu(K)^{-1}\mathds{1}_{g_iK})\cdot v=e_K\cdot \sum_i\alpha_i g_iv=e_Ku=u\]
and we conclude that $u\in \mathcal{H}(G,K)\cdot v$.
\end{proof}

\begin{proposition} \label{PR:GtoHeckeUnit}
Let $V$ be an irreducible unitary representation of
$G$ and let $K\co G$. Then either $V^K=0$,
or $V^K$ is an irreducible unitary representation of $\mathcal{H}(G,K)$.
\end{proposition}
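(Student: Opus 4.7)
The plan is to follow the same strategy as the proof of Proposition~\ref{PR:GtoHecke}, but with the extra care needed to handle closures in the Hilbert space setting.

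First I would observe that $V^K$ is a closed subspace of $V$, since $V^K=\bigcap_{k\in K}\ker(k-\mathrm{id}_V)$ is an intersection of kernels of continuous operators. In particular $V^K$ inherits a Hilbert space structure from $V$, and the paragraph preceding the proposition already shows that $V^K$ is a unitary $\calH(G,K)$-module; thus only the irreducibility remains to be established.

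Assume $V^K\neq 0$, and let $W\subseteq V^K$ be a nonzero closed $\calH(G,K)$-submodule. Let $U:=\mathrm{span}_{\C}(GW)\subseteq V$, and let $\overline{U}$ be its closure in $V$. Since each $g\in G$ acts continuously (in fact, unitarily), $\overline{U}$ is a nonzero closed $G$-invariant subspace of $V$, so irreducibility of $V$ gives $\overline{U}=V$.

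The key step is to show $e_K\overline{U}\subseteq W$, from which it follows that $V^K = e_K V = e_K\overline{U}\subseteq W$ and hence $W=V^K$. For $g\in G$ and $w\in W$, consider
\[
h_g := e_K\star\bigl(\mu(K)^{-1}\charfunc{gK}\bigr).
\]
The convolution with $e_K$ on the left ensures $h_g$ is bi-$K$-invariant, so $h_g\in\calH(G,K)$, and a direct computation (using $kw=w$ for $k\in K$) yields $h_g\cdot w = e_K(gw)$. Since $W$ is $\calH(G,K)$-stable, this places $e_K(gw)$ in $W$. Extending by linearity gives $e_K U\subseteq W$, and since $e_K$ is a bounded operator on $V$ and $W$ is closed, $e_K\overline{U}\subseteq\overline{e_K U}\subseteq W$, as required.

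I do not expect any serious obstacle: the verification that $h_g$ lies in $\calH(G,K)$ and the correct handling of closures via continuity of $e_K$ constitute the only subtleties not present in the proof of Proposition~\ref{PR:GtoHecke}.
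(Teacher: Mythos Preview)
Your argument is correct and follows essentially the same route as the paper: both proofs use the element $e_K\star(\mu(K)^{-1}\charfunc{gK})\in\calH(G,K)$ to show that $e_K(gv)$ lies in any $\calH(G,K)$-stable subspace containing $v$, and then invoke continuity of $e_K$ together with topological irreducibility of $V$. The only cosmetic difference is that the paper fixes a single nonzero vector $v\in V^K$ and shows $\calH(G,K)\cdot v$ is dense in $V^K$, whereas you start from an arbitrary closed submodule $W$; these formulations are equivalent, and your version has the minor advantage of spelling out explicitly that $V^K$ is closed and that $e_K$ acts as a bounded (indeed orthogonal) projection.
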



\begin{proof}
Similarly to the proof of Proposition \ref{PR:GtoHecke}, we need
to show that $V^K$ is the closure of $\calH(G,K)\cdot v$ for
all $0\neq v\in V^K$.
Since $V$ is topologically irreducible, $\Span{gv\,|\, g\in G}$ is dense in $V$.
For all $g\in G$, we have $\mu^{-1}(K)\charfunc{gK}\cdot v=gv$ and thus,
$\Span{\charfunc{gK}\cdot v\,|\,g\in G}$ is dense in $V$.
This in turn implies that
$\Span{e_K\cdot \charfunc{gK}\cdot v\,|\,g\in G}$ is dense in $e_KV=V^K$.
Since $e_K\star \charfunc{gK}\in \calH(G,K)$, it follows
that $\calH(G,K)\cdot v$ is dense in $V$.
\end{proof}

\subsection{Miscellaneous Results}
\label{subsec:misc}


We finish this   section with recalling the Amistur--Levitzky theorem
and two density theorems due to Jacobson and von Neumann. These will be needed
in proving Theorem~\ref{TH:A}.


\medskip

To state the Amitsur--Leviski theorem, let $\Z\langle x_1,\dots,x_n\rangle$
denote the ring of polynomials in $n$ non-commuting variables
over $\Z$. The $n$-th \emph{standard polynomial} is 
define by
\[
S_n(x_1,\dots,x_n)=\sum_{\sigma }\text{sgn}(\sigma)x_{\sigma(1)}\dots x_{\sigma(n)} 
\]
where $\sigma$ runs over all permutations on $\{1,\dots,n\}$.
Recall that
a polynomial $f\in \Z\langle x_1,\dots,x_n\rangle$ is called an \emph{identity}
of a ring $R$ if $f$ vanishes on all inputs from $R$.


\begin{theorem}[{Amitsur--Levitzki  \cite{amitsur-levitzki}}]\label{amitsur-levitzki}
Let $V$ be a $\C$-vector space and let $n\in\N$.
The polynomial $S_{n}(x_1,\dots,x_n)$ is a polynomial identity of $\End_{\C}(V)$ if and only
if $n\geq 2\dim  V$.
\end{theorem}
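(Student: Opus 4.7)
The plan is to reduce to the case $\dim V = k < \infty$ and treat the two implications separately. When $\dim V$ is infinite, any $M_k(\C)$ embeds as a corner of $\End_\C(V)$ via a finite-dimensional subspace and its complement, so if $S_n$ were an identity on $\End_\C(V)$ it would be an identity on every $M_k(\C)$; once the finite-dimensional equivalence is known, this forces $n \geq 2k$ for arbitrarily large $k$, which is absurd. Hence in the infinite case $S_n$ is never an identity, consistent with the stated iff. So assume $\dim V = k < \infty$ and identify $\End_\C(V) = M_k(\C)$.

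For the direction $n < 2k \Rightarrow S_n$ is not an identity, I would exhibit the staircase tuple of matrix units $E_{11}, E_{12}, E_{22}, E_{23}, \ldots, E_{k-1,k}, E_{kk}$, of length $2k-1$. Using $E_{ij} E_{j' \ell} = \delta_{jj'} E_{i\ell}$, a direct check shows that the only permutation yielding a nonzero product is the identity, which contributes $E_{1k}$, so $S_{2k-1}$ evaluated on this tuple equals $E_{1k} \neq 0$. For smaller $n$, the identity $S_n = \sum_{i=1}^n (-1)^{i-1} x_i\, S_{n-1}(x_1, \ldots, \widehat{x_i}, \ldots, x_n)$ shows that if $S_{n-1}$ vanished identically on $M_k(\C)$ so would $S_n$; hence ``not an identity'' propagates downward from $n = 2k-1$ to all smaller $n$.

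For the direction $n \geq 2k \Rightarrow S_n$ is an identity, I would follow Rosset's proof via the Grassmann algebra. Let $E$ be the exterior algebra on anticommuting generators $\epsilon_1, \ldots, \epsilon_n$, and work in $R := M_k(\C) \otimes_\C E \cong M_k(E)$, in which the two tensor factors centralize each other. For $a_1, \ldots, a_n \in M_k(\C)$, put $\xi = \sum_{i=1}^n \epsilon_i a_i \in R$. Expanding $\xi^n$, using $\epsilon_i^2 = 0$ to kill repeated indices and $\epsilon_{\sigma(1)} \cdots \epsilon_{\sigma(n)} = \mathrm{sgn}(\sigma)\, \epsilon_1 \cdots \epsilon_n$ to reorder, gives
\[
\xi^n = \epsilon_1 \cdots \epsilon_n \cdot S_n(a_1, \ldots, a_n).
\]
Since $\epsilon_1 \cdots \epsilon_n$ annihilates no nonzero scalar matrix in $R$, it suffices to prove $\xi^n = 0$.

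The crux is the nilpotence of $\xi$. The element $\eta := \xi^2 = \sum_{i<j} \epsilon_i \epsilon_j [a_i, a_j]$ has entries in the commutative even subalgebra $E^+ \subseteq E$, so Cayley--Hamilton applies to $\eta \in M_k(E^+)$. A short cyclic-trace computation shows $\mathrm{tr}(S_m(b_1, \ldots, b_m)) = 0$ for every even $m$: cyclic rotation of $b_{\sigma(1)} \cdots b_{\sigma(m)}$ leaves the trace invariant but multiplies $\mathrm{sgn}(\sigma)$ by $(-1)^{m-1}$, forcing the sum to equal its own negative when $m$ is even. Reading off Grassmann coefficients then gives $\mathrm{tr}(\xi^m) = 0$ for all even $m$, so $\mathrm{tr}(\eta^j) = \mathrm{tr}(\xi^{2j}) = 0$ for every $j \geq 1$. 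In characteristic zero, Newton's identities recursively force all elementary symmetric polynomials in the eigenvalues of $\eta$ to vanish, so the characteristic polynomial of $\eta$ is $t^k$, and Cayley--Hamilton gives $\eta^k = \xi^{2k} = 0$; hence $\xi^n = \xi^{n-2k} \xi^{2k} = 0$ for all $n \geq 2k$. The main obstacle is this nilpotence step, which requires carefully combining the parity accounting in the cyclic-trace argument with Newton's identities and Cayley--Hamilton over the commutative ring $E^+$; everything else is formal expansion.
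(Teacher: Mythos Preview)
Your proof is correct and complete: the staircase witness for the failure when $n<2\dim V$ is standard and your verification is sound, and the Rosset argument via the exterior algebra for the vanishing when $n\geq 2\dim V$ is carried out accurately (in particular, the parity/cyclic-trace step, Newton's identities over the commutative even part $E^+$, and Cayley--Hamilton are all used correctly). The infinite-dimensional reduction at the start is also fine.

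However, the paper does not prove this theorem at all: it is stated with a citation to the original Amitsur--Levitzki paper and used as a black box in the subsequent arguments. So there is no ``paper's own proof'' to compare against; you have supplied a self-contained proof (via Rosset's method, which is different from and considerably shorter than the original combinatorial argument of Amitsur and Levitzki) where the paper simply invokes the literature.
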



\begin{theorem}[Jacobson density theorem]\label{jacobson_density}
	Let $R$ be a ring, let $M$ be a simple left $R$-module, and let $D$ be the
	$R$-endomorphism ring of $M$. 
	Give $M$ the discrete topology and $\End_D(M)$ the topology induced
	by the product topology on $M^M$ (the point-wise convergence topology).
	Then the image of $R$ in $\End_D(M)$ under the map sending $r$ to $(m\mapsto rm)$
	is dense in $\End_D(M)$.
\end{theorem}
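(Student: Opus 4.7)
The plan is to reduce the claim to a finite interpolation problem and then to use the semisimplicity of the direct power $M^n$ together with the matrix structure of $\End_R(M^n)$.

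First, I would unwind the point-wise convergence topology: a basic open neighborhood of $\phi\in\End_D(M)$ is specified by a finite tuple $m_1,\dots,m_n\in M$ together with the values $\phi(m_1),\dots,\phi(m_n)$. Hence density of the image of $R$ in $\End_D(M)$ is equivalent to the following interpolation claim: for every $\phi\in\End_D(M)$ and every finite tuple $m_1,\dots,m_n\in M$, there exists $r\in R$ such that $rm_i=\phi(m_i)$ for all $i=1,\dots,n$.

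Next, I would lift the problem to $M^n$. Equip $M^n$ with the diagonal left $R$-action and set $N:=R\cdot(m_1,\dots,m_n)\subseteq M^n$; the interpolation claim is then equivalent to showing that $(\phi(m_1),\dots,\phi(m_n))\in N$. Since $M$ is simple, $M^n$ is semisimple, being a finite direct sum of copies of $M$, and therefore $N$ admits an $R$-module complement. This yields an $R$-linear projection $\pi:M^n\to M^n$ with image $N$ and with $\pi|_N=\mathrm{id}_N$. Under the standard isomorphism $\End_R(M^n)\cong \Mat_n(D)$, the map $\pi$ corresponds to a matrix $(d_{ij})$ with entries in $D$, acting by $(x_1,\dots,x_n)\mapsto\bigl(\sum_j d_{ij}(x_j)\bigr)_{i}$.

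Finally I would combine the two observations. Because $\pi$ fixes $(m_1,\dots,m_n)$, we obtain $m_i=\sum_j d_{ij}(m_j)$ for each $i$. Applying $\phi$, which by hypothesis commutes with every element of $D$, gives $\phi(m_i)=\sum_j d_{ij}(\phi(m_j))$; this says precisely that $\pi$ also fixes the tuple $(\phi(m_1),\dots,\phi(m_n))$, so the tuple lies in the image $N$ of $\pi$, producing the required $r\in R$. The one genuinely delicate input is the semisimplicity of $M^n$: without the simplicity hypothesis on $M$ there would be no reason for the $R$-linear projection $\pi$ to exist, and the argument would break down. The identification of $\End_R(M^n)$ with $n\times n$ matrices over $D$ is routine by comparison.
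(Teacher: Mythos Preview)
Your argument is correct and is the standard textbook proof of the Jacobson density theorem: reduce density to a finite interpolation problem, use semisimplicity of $M^n$ to split off the cyclic submodule $N=R\cdot(m_1,\dots,m_n)$, and then use that the resulting $R$-linear projection is given by a matrix over $D$, hence commutes with~$\phi$. The paper does not supply its own proof but simply cites a standard reference, so there is nothing further to compare.
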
 



\begin{proof}
See, for instance, \cite[Theorem 13.14]{jacobson_density}. 
\end{proof}

%
%

Given a Hilbert space $V$, let $\calB(V)$ denote
the algebra of bounded linear automorphisms of $V$.
For a subset $S\subseteq \calB(V)$,
we let $S'$ denote the commutant of $S$, namely, the set of elements
of $\calB(V)$ that commute with all elements in $S$.  



\begin{theorem}[Von Neumann density theorem]
\label{von_neumann_density} 
Let $V$ be a Hilbert space and let $A\subset \calB(V)$ be a unital subalgebra closed under taking adjoints. 
Then $A$ is dense in $A''$ in the strong  operator topology (and hence also in the weak operator topology). 
\end{theorem}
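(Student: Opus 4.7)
The plan is to establish density in the strong operator topology (SOT); density in the weak operator topology then follows immediately, since the latter is coarser. By definition, SOT-density amounts to the following: given $T \in A''$, finitely many vectors $v_1,\dots,v_n \in V$, and $\epsilon > 0$, I must produce $S \in A$ with $\|Sv_i - Tv_i\| < \epsilon$ for each $i$.

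First I would treat the single-vector case $n = 1$. Let $K = \overline{Av_1}$ and let $P \in \calB(V)$ be the orthogonal projection onto $K$. By construction $K$ is $A$-invariant; and because $A$ is closed under adjoints, $K^{\perp}$ is also $A$-invariant, since for $u \in K^{\perp}$, $w \in K$, and $a \in A$ one has $\langle au, w\rangle = \langle u, a^{*}w\rangle = 0$. It follows that $P$ commutes with every element of $A$, so $P \in A'$. Consequently any $T \in A''$ commutes with $P$ and preserves $K$; since $A$ is unital, $v_1 \in K$, so $Tv_1 \in K = \overline{Av_1}$, and some $S \in A$ achieves $\|Sv_1 - Tv_1\| < \epsilon$.

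For the general case I would use the standard amplification trick. Let $\widetilde V = V \oplus \cdots \oplus V$ ($n$ copies) with the direct-sum inner product, and define the diagonal embedding $\pi \colon \calB(V) \to \calB(\widetilde V)$ by $\pi(b) = b \oplus \cdots \oplus b$. Set $\widetilde A = \pi(A)$, still a unital $*$-subalgebra. Identifying $\calB(\widetilde V)$ with $n\times n$ block matrices over $\calB(V)$, a direct computation gives $\widetilde A' = \{(B_{ij}) : B_{ij} \in A' \text{ for all } i,j\}$, and a second computation yields $\widetilde A'' = \pi(A'')$. Applying the single-vector case to $\widetilde A$, the operator $\pi(T) \in \widetilde A''$, and the vector $\widetilde v = (v_1,\dots,v_n) \in \widetilde V$ produces some $S \in A$ with $\|\pi(S)\widetilde v - \pi(T)\widetilde v\|^{2} = \sum_{i} \|Sv_i - Tv_i\|^{2} < \epsilon^{2}$, which concludes the argument.

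The main obstacle is the commutant calculation. The equality $\widetilde A' = M_n(A')$ is verified by testing a block operator $(B_{ij})$ against every $\pi(a)$, which forces each entry $B_{ij}$ individually to commute with every $a \in A$. For the double commutant, one tests an element $(C_{ij}) \in \widetilde A''$ against the matrix units $E_{pq}$ (with entry $1_V$) and against scalar multiples of these by arbitrary elements of $A'$; this forces $C_{ij} = 0$ for $i \neq j$, all diagonal entries to coincide, and the common value to commute with every element of $A'$, hence to lie in $A''$. Both verifications are elementary but require some care in bookkeeping; once they are in hand, the two-step reduction above delivers the theorem.
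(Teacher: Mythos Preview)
Your argument is correct and is the classical proof of the von Neumann density theorem: reduce to a single vector via the amplification (matrix) trick, and handle the single-vector case by showing that the orthogonal projection onto $\overline{Av_1}$ lies in $A'$. The paper itself does not give a proof but merely cites \cite[Theorem~9.3.3]{Palmer_algebra}; your write-up is essentially the standard argument one finds in such references, so there is nothing to compare beyond noting that you have supplied what the paper outsources.
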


\begin{proof}
See, for instance, \cite[Theorem 9.3.3]{Palmer_algebra}.
\end{proof}

\section{Proof of Theorem~\ref{TH:A}}



%
 
We begin with several results about $\C$-algebras and $*$-algebras.
The Jacobson radical of a ring $R$ is denoted $\Jac(R)$.

%

\begin{proposition}\label{PR:algebra}
	Let $A$ be a unital $\C$-algebra. Consider the conditions:
	\begin{enumerate}
\item[(a)] Every simple $A$-module has $\C$-dimension at most $n$.
\item[{(b)}] $A/\Jac(A)$ has $S_{2n}$ 
as a polynomial identity. 
	\end{enumerate}
	Then (a) implies (b), and the converse holds provided $\dim_\C A<{2^{\aleph_0}}$.
\end{proposition}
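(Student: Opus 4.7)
The plan is to prove both directions by relating simple $A$-modules to polynomial identities through the Amitsur--Levitzki theorem, using that $\Jac(A)$ is the intersection of annihilators of simple modules.

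For (a)~$\Rightarrow$~(b): Since $\Jac(A)=\bigcap_M \mathrm{ann}_A(M)$ as $M$ ranges over (isomorphism classes of) simple $A$-modules, the canonical map $A\to \prod_M \End_\C(M)$ descends to an injection of $A/\Jac(A)$ into $\prod_M \End_\C(M)$. By hypothesis $\dim_\C M\leq n$, so Theorem~\ref{amitsur-levitzki} says $S_{2n}$ is an identity of each $\End_\C(M)$. Polynomial identities pass to direct products and to subalgebras, hence to $A/\Jac(A)$.

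For (b)~$\Rightarrow$~(a), assuming $\dim_\C A<2^{\aleph_0}$: Let $M$ be a simple $A$-module. Proposition~\ref{LM:Schur-smooth} applies and gives $\End_A(M)=\C$, so Theorem~\ref{jacobson_density} implies that the image of $A$ in $\End_\C(M)$ is dense in the point-wise convergence topology. Assume toward a contradiction that $\dim_\C M\geq n+1$, pick linearly independent $v_1,\dots,v_{n+1}\in M$, and let $W=\Span{v_1,\dots,v_{n+1}}$. By Theorem~\ref{amitsur-levitzki} applied to $W$, there exist $b_1,\dots,b_{2n}\in \End_\C(W)$ with $S_{2n}(b_1,\dots,b_{2n})\neq 0$. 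Using Jacobson density (applied to the finite tuple $v_1,\dots,v_{n+1}$), choose $a_j\in A$ with $a_j v_i = b_j v_i$ for every $i\in\{1,\dots,n+1\}$ and every $j\in\{1,\dots,2n\}$; note $b_j v_i\in W$ since $b_j\in \End_\C(W)$.

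An easy induction on the length $k$ of a monomial $a_{j_k}\cdots a_{j_1}$ then yields $a_{j_k}\cdots a_{j_1}v_i = b_{j_k}\cdots b_{j_1}v_i$ for each~$i$: expanding $b_{j_1}v_i\in W$ in the basis $v_1,\dots,v_{n+1}$ reduces the claim for length $k$ to the same claim for length $k-1$ applied to these basis vectors. Summing with signs, $S_{2n}(a_1,\dots,a_{2n})v_i = S_{2n}(b_1,\dots,b_{2n})v_i$, which is nonzero for some~$i$. Since $\Jac(A)$ annihilates $M$, the $A$-action on $M$ factors through $A/\Jac(A)$, so $S_{2n}(a_1,\dots,a_{2n})$ must act as zero on $M$—a contradiction. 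Hence $\dim_\C M\leq n$.

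The genuine content lies in the density step of the converse: a bare quotient/subalgebra argument is not enough because identities of a dense subalgebra need not pass to its closure (in pointwise convergence) in general, so the agreement between $a_j$ and $b_j$ must be forced on an entire spanning set of the invariant subspace $W$ rather than on one vector, which is exactly what the Jacobson density theorem supplies once Schur's lemma has fixed $\End_A(M)=\C$.
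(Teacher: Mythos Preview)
Your proof is correct and uses the same ingredients as the paper: for (a)$\Rightarrow$(b) the embedding of $A/\Jac(A)$ into $\prod_M \End_\C(M)$, and for (b)$\Rightarrow$(a) Schur's lemma (Proposition~\ref{LM:Schur-smooth}), Jacobson density, and Amitsur--Levitzki. The only real difference is in how the density step is cashed out in the converse.

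The paper argues more briefly: with $M$ given the discrete topology, both addition and multiplication on $\End_\C(M)\subseteq M^M$ are jointly continuous for the product topology, so the evaluation map $(x_1,\dots,x_{2n})\mapsto S_{2n}(x_1,\dots,x_{2n})$ is continuous and its zero set is closed. Since $S_{2n}$ vanishes on the dense image of $A/\Jac(A)$, it vanishes on all of $\End_\C(M)$, and Amitsur--Levitzki then gives $\dim_\C M\leq n$ directly. Your closing paragraph suggests this topological shortcut is unavailable, but in the present setup it works perfectly well; your explicit construction with the finite-dimensional subspace $W$ and the induction on monomial length is exactly a by-hand verification of that continuity. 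Both arguments are valid---yours is more concrete, the paper's is shorter.
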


\begin{proof}
%
	Suppose (a) holds.
	Let $\mathscr{S}$ be a set of representative for the isomorphism
classes of simple left $A$-modules.
	By the definition of $\Jac(A)$,  the map
\[ A/\Jac(A) 
\to \prod_{M\in \mathscr{S}}\End_\C(M) 
\]
sending $a+\Jac(A)\in A/\Jac(A)$ to $(x\mapsto ax)\in\End_\C(M)$ in the $M$-component, is injective.
Condition (a) and Theorem~\ref{amitsur-levitzki} imply that $S_{2n}$
is a polynomial identity of $\End_{\C}(M)$ for all $S\in \mathscr{S}$,
and hence also of $A/\Jac(A)$. 

	Suppose now that (b) holds and $\dim_\C A<{2^{\aleph_0}}$.
	Let $M$ be a simple $A$-module.
	Consider $M$ as a discrete topological space and make
	$\End_{\C}(M)$ into a topological ring by giving it the topology 
	induced from the product topology on $M^M$.
	By Proposition~\ref{LM:Schur-smooth}, $\End_A(M)=\C$,
	and hence, by  Theorem~\ref{jacobson_density},
	the image of $A$ in $\End_\C(M)$ is dense.
	Since the map $A\to \End_{\C}(M)$ factors through $A/\Jac(A)$,
	condition (b) implies that $S_{2n}$ vanishes on all inputs from dense subset
	of $\End_{\C}(M)$. Since the topology on $\End_{\C}(M)$ is Hausdorff,
	it follows that $S_{2n}$ is a polynomial identity of $\End_{\C}(M)$.
	Thus, $\dim M\leq n$ by Theorem~\ref{amitsur-levitzki}.
\end{proof}

\begin{lemma}\label{LM:Jac-unitary}
	Let $A$ be a unitary $*$-algebra and let $V$
	be a unitary representation of $A$.
	Then $\Jac(A)\cdot V=0$.
\end{lemma}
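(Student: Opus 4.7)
The plan is to show that for every $a \in \Jac(A)$ and $v \in V$ one has $av=0$, by analyzing the spectrum of the positive self-adjoint operator $T=\pi(a)^{*}\pi(a)$, where $\pi\colon A\to\calB(V)$ denotes the action of $A$ on $V$.

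The first step is to verify that each $a\in A$ actually acts on $V$ by a bounded operator, so that $\pi$ lands in $\calB(V)$. The relation $\scal{au,v}=\scal{u,a^{*}v}$ exhibits $a^{*}$ as an everywhere-defined Hilbert space adjoint of $a$, and a standard closed-graph argument then forces $\pi(a)$ to be bounded with $\pi(a)^{*}=\pi(a^{*})$. Hence $\pi$ is a unital $*$-homomorphism.

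Next I would record two standard facts about $\Jac(A)$: it is a two-sided ideal (and so in particular a $\C$-subspace of $A$), and it is $*$-invariant, because the left and right Jacobson radicals of an associative ring coincide while $*$ interchanges left ideals with right ideals. Consequently, for $a\in\Jac(A)$ and any $\lambda\in\C$, the element $\lambda a^{*}a$ also lies in $\Jac(A)$, so $1-\lambda a^{*}a$ is a unit of $A$. Applying $\pi$ transports this inverse into $\calB(V)$, showing that $1-\lambda T$ is invertible in $\calB(V)$ for every $\lambda\in\C$; equivalently, the spectrum $\sigma(T)\subseteq\C$ is contained in $\{0\}$.

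The remainder is immediate spectral theory: $T$ is self-adjoint (indeed, positive), so $\|T\|=\rho(T)=0$, hence $T=0$, and therefore $\|\pi(a)v\|^{2}=\scal{Tv,v}=0$ for every $v\in V$, giving $aV=0$. The only step requiring genuine attention is the boundedness of $\pi(a)$, since the paper's definition of a unitary representation of a $*$-algebra does not build in continuity of each $a$ a priori; once that is settled, the remainder is routine. I expect no other obstacles.
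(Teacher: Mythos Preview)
Your argument is correct and runs along the same lines as the paper's: reduce to a self-adjoint element of $\Jac(A)$, observe that its image in $\calB(V)$ has spectrum $\{0\}$ because $1-\lambda b$ is invertible in $A$ for every $b\in\Jac(A)$, and conclude it acts as zero since a self-adjoint operator with spectral radius zero is zero.

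The only difference is cosmetic. The paper handles a general $a\in\Jac(A)$ by writing $a=\frac{a+a^*}{2}+i\,\frac{a-a^*}{2i}$ and applying the self-adjoint case to each summand, which requires the $*$-invariance of $\Jac(A)$. You instead pass to $a^*a$, which lies in $\Jac(A)$ simply because $\Jac(A)$ is a two-sided ideal (so your invocation of $*$-invariance is in fact unnecessary), and then recover $av=0$ from $\|av\|^2=\langle a^*av,v\rangle=0$. Your route is marginally more economical. Your explicit closed-graph justification that each $\pi(a)$ is bounded is also a welcome addition; the paper's proof tacitly assumes this when speaking of $\mathrm{Spec}(T_a)$.
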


\begin{proof}
	Let $a\in \Jac(A)$. Suppose first that $a^*=a$.
	Then the operator $T_a:V\to V$ given by $T_a(v)=av$
	is self-adjoint. Since $a\in \Jac(A)$, the element
	$\lambda-a$ is invertible in $A$ for all $0\neq \lambda \in \C$.
	As a result,  $\mathrm{Spec}(T_a)=\{0\}$, and 
	since $T$ is self-adjoint, this forces $T=0$ and $a\cdot V=0$.
	
	The general case follows by noting that any
	$a\in \Jac(A)$ satisfies
	$a=\frac{a+a^*}{2}+i(\frac{a-a^*}{2i})$ with   $\frac{a+a^*}{2}$ and
	$\frac{a-a^*}{2i} $
	being elements of $\Jac(A)$ fixed by $*$. 
\end{proof}

\begin{proposition}\label{PR:star-alg}
	Let $A$ be a unital $*$-algebra. Consider the   conditions:
	\begin{enumerate}
\item[(a$'$)] Every irreducible unitary representation of $A$  has $\C$-dimension at most $n$.
\item[{(b)}] $A/\Jac(A)$ has $S_{2n}$ 
as a polynomial identity.
	\end{enumerate} 
	Then (b) implies (a$'$), and the converse holds provided $A/\Jac(A)$ admits a faithful unitary
	representation.
\end{proposition}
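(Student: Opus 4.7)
The plan is to prove each direction by passing to an algebra of bounded operators on a Hilbert space and invoking Amitsur--Levitzki (Theorem \ref{amitsur-levitzki}).

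For (b) $\Rightarrow$ (a$'$), let $V$ be an irreducible unitary representation of $A$. By Lemma \ref{LM:Jac-unitary} the action factors through $A/\Jac(A)$; write $\tilde A \subseteq \calB(V)$ for the image, a unital $*$-subalgebra. Proposition \ref{LM:Schur-unitary} gives $\tilde A' = \C \cdot \mathrm{Id}_V$, so $\tilde A'' = \calB(V)$, and the von Neumann density theorem (Theorem \ref{von_neumann_density}) makes $\tilde A$ strongly dense in $\calB(V)$. Because left and right multiplication by a fixed bounded operator are strongly continuous, $S_{2n}$ is strongly continuous in each of its variables when the other $2n-1$ are held fixed. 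Condition (b) says $S_{2n}$ vanishes on $\tilde A^{2n}$; approximating each argument in turn by a strong limit from $\calB(V)$ then shows that $S_{2n}$ vanishes on $\calB(V)^{2n}$, and Amitsur--Levitzki forces $\dim_\C V \leq n$.

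For the converse, assume $B := A/\Jac(A)$ admits a faithful unitary representation $V_0$, and identify $B$ with its image in $\calB(V_0)$. Let $\bar B$ be the norm closure of $B$ in $\calB(V_0)$, a $C^*$-subalgebra. The key intermediate step is to show that every irreducible $*$-representation $\pi : \bar B \to \calB(W)$ satisfies $\dim_\C W \leq n$. To see this, note that $\pi$ is automatically norm-continuous (a standard fact for $C^*$-algebras) and $B$ is norm-dense in $\bar B$, so any bounded operator on $W$ commuting with $\pi(B)$ also commutes with $\pi(\bar B)$; irreducibility of $\pi$ then gives $\pi(B)' = \C \cdot \mathrm{Id}_W$, so $\pi|_B$ is itself an irreducible unitary representation of $B$, and hypothesis (a$'$) yields the claimed bound on $\dim W$. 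Amitsur--Levitzki next gives $\pi(S_{2n}(a_1,\dots,a_{2n})) = 0$ for all $a_1,\dots,a_{2n} \in \bar B$; combining this with the Gelfand--Naimark formula $\|x\| = \sup_\pi \|\pi(x)\|$, with the supremum taken over the irreducible $*$-representations of the $C^*$-algebra $\bar B$ (see \cite{Palmer_algebra}), one concludes that $S_{2n}$ vanishes identically on $\bar B$, and hence on $B$.

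The main obstacle lies in the converse direction, which requires importing two standard but nontrivial facts from $C^*$-algebra theory not recorded in the preliminaries---automatic norm-continuity of $*$-homomorphisms, and the Gelfand--Naimark realization of the norm via irreducible $*$-representations---in order to leverage Amitsur--Levitzki. The forward direction, by contrast, uses only the tools already collected in the preliminaries, together with the elementary observation that multiplication in $\calB(V)$ is separately strongly continuous.
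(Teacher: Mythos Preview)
Your argument for (b)$\Rightarrow$(a$'$) is essentially identical to the paper's: factor through $A/\Jac(A)$ via Lemma~\ref{LM:Jac-unitary}, apply the unitary Schur lemma to get $A''=\calB(V)$, use von Neumann density, and conclude with Amitsur--Levitzki.

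For (a$'$)$\Rightarrow$(b) you take a genuinely different route. The paper stays at the level of the $*$-algebra $B=A/\Jac(A)$: it invokes a separation result (\cite[Cor.~2.30]{Uriya}) guaranteeing that for each nonzero $a\in B\subseteq\calB(V)$ there is an irreducible unitary representation $V_a$ of $B$ on which $a$ acts nontrivially, so that $B$ embeds in $\prod_a\calB(V_a)$ and Amitsur--Levitzki applies factorwise. You instead pass to the $C^*$-closure $\bar B$, use automatic continuity plus Gelfand--Naimark to obtain the analogous separation by irreducible $*$-representations of $\bar B$, and then restrict each such representation back to $B$, checking that the restriction remains irreducible. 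Both arguments ultimately encode the same GNS-type fact that $B$ has enough irreducible unitary representations; the paper cites it as a black box for $*$-algebras acting on Hilbert space, while you recover it from standard $C^*$-algebra theory at the cost of the extra detour through $\bar B$ and the (routine, but not explicitly stated) observation that $\pi(B)'=\C$ forces topological irreducibility because $\pi(B)$ is $*$-closed.
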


\begin{proof}
	Suppose (b) holds and let $V$ be an irreducible unitary
	representation of $A$. 
	We may
	replace $A$ with its image in $\calB(V)$  and hence assume
	that $\Jac(A)=0$, by Lemma~\ref{LM:Jac-unitary}.
	Now,    by Proposition~\ref{LM:Schur-unitary}, the commutant
	$A'$ is $\C$, and hence $A''=\calB(V)$.
	By Theorem~\ref{von_neumann_density}, $A$ is dense in $\calB(V)$
	in the strong operator topology. Condition (b) therefore implies
	that $S_{2n}$ is a polynomial identity of $\calB(V)$.
	It is easy to check that this is impossible if $V$ is infinite-dimensional,
	whereas in the finite-dimensional case, Theorem~\ref{amitsur-levitzki}
	implies that $\dim V\leq n$, as required.
	
	Suppose now that (a$'$) holds and $A/\Jac(A)$ admits a faithful unitary
	representation  $V$. 
	By Lemma~\ref{LM:Jac-unitary}, we may replace $A$ with its image in 
	$\calB(V)$ and assume that $\Jac(A)=0$ hereafter.
	Let $0\neq a\in A$. The the operator 
	$v\mapsto av:V\to V$ is nonzero, and hence, by
	\cite[Corollary 2.30]{Uriya}, there exists an irreducible
	unitary representation $V_a$ of $A$ such that the image of
	$a$ in $\calB(V_a)$ is nonzero.
	Consequently, the ring homomorphism
	\[
	A  \longhookrightarrow \prod_{a\neq A}\mathcal{B}(V_a)  
	\]
	is injective. By condition (a$'$) and Theorem~\ref{amitsur-levitzki}, 
	$S_{2n}$ is a polynomial identity of each of the factors $\calB(V_a)$,
	and \emph{a fortiori} of $A$ as well.
\end{proof}

We finally prove Theorem~\ref{TH:A} by establishing the following
slightly stronger version.
Notice that if   $|G/K|<{2^{\aleph_0}}$ holds
for some $K\co G$, then it holds for all $K\co G$.

\begin{theorem}\label{TH:IUA_smooth_unit}
Let $G$ be an $\ell$-group, let $K$ be a compact open subgroup with $|G/K|<{2^{\aleph_0}}$,
and let $n\in\N$. 
Then the following conditions are equivalent:
\begin{enumerate}
\item[(a)] All irreducible unitary representations $V$ of $G$ satisfy $\dim\p{V^K}\leq n$.
\item[(b)] All irreducible smooth representations $V$ of $G$ satisfy $\dim\p{V^K}\leq n$.
\end{enumerate}
In fact, the implication (b)$\Longrightarrow$(a) holds without assuming $|G/K|<{2^{\aleph_0}}$.
\end{theorem}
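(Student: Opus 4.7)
The plan is to work with the relative Hecke algebra $A := \mathcal{H}(G,K)$, translating conditions (a) and (b) into statements about $A$-modules via Propositions~\ref{PR:GtoHecke} and~\ref{PR:GtoHeckeUnit}, then applying Propositions~\ref{PR:algebra} and~\ref{PR:star-alg}. Three preliminary observations about $A$ drive the argument. First, $A$ has a basis indexed by $K\backslash G/K$, so $\dim_\C A \leq |G/K| < 2^{\aleph_0}$ under the hypothesis, which makes Proposition~\ref{PR:algebra} applicable in full. Second, a short computation yields $f \star \charfunc{K} = \mu(K)\cdot f$ for every $f \in A$, so $A$ acts faithfully on $L^2(G)^K$ (with $G$ acting on $L^2(G)$ by left translation against left Haar measure). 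Third, combining this faithfulness with Lemma~\ref{LM:Jac-unitary} forces $\Jac(A) = 0$, so $A = A/\Jac(A)$ admits a faithful unitary representation, which makes Proposition~\ref{PR:star-alg} applicable in full.

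The direction (b) $\Rightarrow$ (a), which should not require the cardinality hypothesis, hinges on the lemma that every simple $A$-module $M$ is isomorphic to $V^K$ for some irreducible smooth $G$-representation $V$. I would prove this by forming the smooth $G$-module $\tilde M := \mathcal{H}(G) e_K \otimes_A M$ (where $\mathcal{H}(G)$ is the full Hecke algebra and $\mathcal{H}(G)e_K$ carries the natural right $A$-action), checking that $\tilde M^K \cong M$ and that $\tilde M$ is cyclic, picking via Zorn a $G$-submodule $N \leq \tilde M$ maximal with $N^K = 0$, and arguing that $\tilde M / N$ has a unique maximal proper submodule: any two such would have $K$-invariants either all of $M = (\tilde M/N)^K$ (impossible, as $\tilde M/N$ is generated by its $K$-invariants) or zero (contradicting the maximality of $N$ after pulling back to $\tilde M$). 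The corresponding irreducible quotient $V$ satisfies $V^K \cong M$. Chaining: (b) with Proposition~\ref{PR:GtoHecke} gives that every simple $A$-module has $\C$-dimension $\leq n$; the easy direction of Proposition~\ref{PR:algebra} gives that $A/\Jac(A)$ satisfies $S_{2n}$; the easy direction of Proposition~\ref{PR:star-alg} gives that every irreducible unitary $A$-representation has $\C$-dimension $\leq n$; and Proposition~\ref{PR:GtoHeckeUnit} yields (a).

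For the direction (a) $\Rightarrow$ (b), the analogous unitary lemma is needed: every irreducible unitary representation $V$ of $A$ is isomorphic to $W^K$ for some irreducible unitary $G$-representation $W$. Given this lemma, one runs the chain in reverse: (a) with Proposition~\ref{PR:GtoHeckeUnit} gives that every irreducible unitary $A$-representation has dimension $\leq n$; the hard direction of Proposition~\ref{PR:star-alg}, applied with the faithful unitary representation $L^2(G)^K$, gives that $A$ satisfies $S_{2n}$; the hard direction of Proposition~\ref{PR:algebra}, using $\dim_\C A < 2^{\aleph_0}$, gives that every simple $A$-module has dimension $\leq n$; and Proposition~\ref{PR:GtoHecke} yields (b). The main obstacle is the unitary lemma. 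The natural approach is to equip the algebraic tensor product $\mathcal{H}(G) e_K \otimes_A V$ with the sesquilinear form $\langle f_1 e_K \otimes v_1,\, f_2 e_K \otimes v_2\rangle := \langle (e_K f_2^* f_1 e_K) v_1,\, v_2\rangle_V$, well-defined since $e_K f_2^* f_1 e_K \in A$, complete to a Hilbert space on which $G$ acts unitarily, and extract the desired irreducible representation via a closed-subspace analog of the Zorn argument used in the smooth case (quotienting by the maximal closed $G$-subrepresentation with trivial $K$-invariants, then using that $V$ is irreducible as an $A$-module). The delicate step is verifying positivity of the form: one must show that for $f \in \mathcal{H}(G)$, the element $(fe_K)^*(fe_K) \in A$ acts as a positive operator in every unitary $A$-representation---nontrivial since $fe_K$ itself need not lie in $A$, so positivity does not reduce immediately to the identity $\langle a^*a v, v\rangle = \|av\|^2 \geq 0$ available for $a \in A$.
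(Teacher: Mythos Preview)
Your skeleton is exactly the paper's: set $A=\calH(G,K)$, observe $\dim_\C A\le |K\backslash G/K|<2^{\aleph_0}$, exhibit the faithful unitary $A$-representation on $L^2(K\backslash G)$ (the paper uses this space; your $L^2(G)^K$ is the same thing), and then invoke Propositions~\ref{PR:algebra} and~\ref{PR:star-alg}. The paper's proof is a single paragraph and stops there, asserting that Propositions~\ref{PR:GtoHecke} and~\ref{PR:GtoHeckeUnit} make the passage to $A$ ``enough''.

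Where you diverge is in scrupulousness. Those two propositions only show that $V\mapsto V^K$ sends irreducible $G$-representations to irreducible $A$-modules of the matching type; the paper never addresses why every simple $A$-module (resp.\ every irreducible unitary $A$-representation) arises this way. Without these lifting lemmas the reduction is not literally complete: one needs the smooth lifting to run (b)$\Rightarrow$(a) and the unitary lifting to run (a)$\Rightarrow$(b). Your sketch of the smooth lifting via $\calH(G)e_K\otimes_A M$ is the standard construction and is correct. The unitary lifting, with the positivity of the induced form that you flag, is genuinely the subtler point, and the paper simply does not comment on it. So your proposal is the paper's argument with the reduction step spelled out more carefully than the paper itself does, together with an honestly identified loose end that the paper leaves implicit.
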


\begin{proof} 
Let $A=\calH(G,K)$.
By Propositions~\ref{PR:GtoHecke} and~\ref{PR:GtoHeckeUnit},
it is enough to show that every simple $A$-module is of dimension $\leq n$
if and only if every irreducible unitary representation of $A$ is of dimension $\leq n$.
This would follow from Propositions~\ref{PR:algebra} and~\ref{PR:star-alg}
if we check that $\dim_{\C}A<{2^{\aleph_0}}$ and $A$ admits a faithful unitary
representation.
The former condition holds since $|G/K|<{2^{\aleph_0}}$,
whereas for the second condition one can consider the left action
of $A=\calH(G,K)$ on $\mathrm{L}^2(K{\setminus}G)$ by convolution.
Indeed, if $a\star f=0$ for all $f\in \mathrm{L}^2(K{\setminus}G)$,
then $a=a\star e_K=0$.
%
%
%
%
%
%
%
\end{proof}

\begin{rem}\label{RM:counterexamples}
	(i) The assumption that   $|G/K|<{2^{\aleph_0}}$ in Theorems~\ref{TH:A}
	and~\ref{TH:IUA_smooth_unit} is necessary in general.
	For instance, consider the function field
	$\C(t)$ and let $G$ be the multiplicative group $\C(t)^\times$ with the discrete
	topology.
	Since $G$ is commutative and locally compact, 
	any unitary
	representation of $G$ is $1$-dimensional.
	However, $\C(t)$ with the $G$-action induced by the product
	in $\C(t)$ is an infinite-dimensional
	irreducible smooth representation of $G$.
	
	(ii) The assumption that $\dim_\C A<{2^{\aleph_0}}$
	in Proposition~\ref{PR:algebra} cannot be dropped
	in general. 
	For example, take $A=\C(t)$ and note that
	$S_2(x_1,x_2)=x_1x_2-x_2x_1$ is a polynomial identity
	of $A$, while $A$   itself is a simple
	$A$-module of infinite $\C$-dimension. 
	
	(iii) The assumption that $A/\Jac(A)$ admits a faithful unitary
	representation in Proposition~\ref{PR:star-alg} is necessary
	in general.
	A counterexample can be constructed as follows: Let $*$ denote
	the unique involution of $\C(t)$ extending the complex
	conjugation and fixing $t$.
	Let $A=\Mat_2(\C(t))$
	and let $*:A\to A$ be the involution given by
	$(f_{ij})\mapsto (f^*_{ji})$.
	Then $S_2(x_1,x_2)=x_1x_2-x_2x_1$  is not a polynomial
	identity of $A$, but all irreducible unitary
	representations of $A$ have dimension~$\leq 1$.
	In fact, the latter   holds vacuously, since $A$ has no
	nonzero unitary representations. To see this, note that
	if $V$ is a Hilbert space and $A\to \calB(V)$ is a $*$-homomorphism,
	then the image of $[\begin{smallmatrix} t & 0 \\ 0 & t \end{smallmatrix}]$
	in $\calB(V)$ is a self-adjoint operator with an empty spectrum, forcing $V=0$.
\end{rem}

 \section{Proof of Theorem~\ref{TH:B}}


	Theorem~\ref{TH:B}  follows readily from the following theorem.

\begin{theorem}
	Let $H$ be an open  subgroup of $G$ of finite index  
	and let $K$ be a compact open subgroup of $H$. Then there is $n\in \N$ 
	and a compact open subgroup $L\leq K$ such
	that:
	\begin{enumerate}
		\item[(i)] For any irreducible smooth
		representation $V$ of $H$, there are   irreducible 
		smooth representations $U_1,\dots,U_m$ ($m\leq n$) of $G$
		such that $\dim V^K\leq   \sum_i\dim U_i^L$.
		\item[(ii)] For any irreducible smooth
		representation $U$ of $G$, there are   irreducible 
		smooth representations $V_1,\dots,V_m$ ($m\leq n$) of $H$
		such that $\dim U^K\leq   \sum_i\dim V_i^L$.
	\end{enumerate}
\end{theorem}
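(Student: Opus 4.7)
The plan is to use Frobenius reciprocity in both directions, combined with the Mackey decomposition, to transfer between representations of $G$ and $H$. Because $H$ is open in $G$ of finite index, smooth induction $\mathrm{Ind}_H^G$ coincides with compact induction and is simultaneously left and right adjoint to $\mathrm{Res}_H^G$. I set $n := [G:H]$, choose double coset representatives $g_1,\dots,g_r \in H\backslash G/K$ with $g_1 = 1$ (so $r \leq n$), and take
\[
L := \bigcap_{i=1}^{r} g_i K g_i^{-1},
\]
a compact open subgroup of $K$ (hence of $H$).

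For part (i), let $V$ be an irreducible smooth $H$-representation and put $W := \mathrm{Ind}_H^G V$. Frobenius reciprocity furnishes a canonical $H$-equivariant injection $V \hookrightarrow \mathrm{Res}_H^G W$, and $W$ is generated as a $G$-representation by this copy of $V$. The class of $G$-subrepresentations $N \subsetneq W$ satisfying $N \cap V = 0$ is closed under increasing unions, so Zorn's lemma supplies a maximal such $M$. The quotient $U := W/M$ is then a simple smooth $G$-representation: any nonzero proper $G$-subrepresentation of $U$ would pull back to a subrepresentation of $W$ strictly containing $M$, which by maximality meets $V$ nontrivially, hence contains $V$ by irreducibility, hence contains $G \cdot V = W$, a contradiction. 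The image of $V$ in $U$ is an $H$-equivariant copy of $V$, whence $V^K \hookrightarrow U^K \subseteq U^L$, giving (i) with $m = 1$.

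For part (ii), let $U$ be an irreducible smooth $G$-representation. The crucial point is that $\mathrm{Res}_H^G U$ is finitely generated: fixing any $0 \neq u \in U$ and representatives $\gamma_1,\dots,\gamma_n$ for the right cosets $H\backslash G$, one has $U = Gu = \sum_{i=1}^{n} H\gamma_i u$. A standard Zorn argument (exploiting this finite generation) then produces a simple smooth $H$-quotient $V$ of $\mathrm{Res}_H^G U$, and by Frobenius reciprocity the surjection $U \twoheadrightarrow V$ corresponds to a $G$-equivariant homomorphism $U \to \mathrm{Ind}_H^G V$, necessarily injective by simplicity of $U$. Hence $U^K \hookrightarrow (\mathrm{Ind}_H^G V)^K$, and the Mackey decomposition for induction from an open subgroup of finite index yields
\[
(\mathrm{Ind}_H^G V)^K \;\cong\; \bigoplus_{i=1}^{r} V^{H \cap g_i K g_i^{-1}}.
\]
By the choice of $L$, each summand is contained in $V^L$, so $\dim U^K \leq r \cdot \dim V^L$, and setting $V_1 = \cdots = V_r = V$ with $r \leq n$ completes part (ii). The main technical point is verifying the Mackey decomposition and handling the Zorn's-lemma constructions in the smooth $\ell$-group setting, where representations can be infinite-dimensional; these go through routinely, provided some care with bookkeeping of coset representatives and induction conventions.
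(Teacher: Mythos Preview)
Your argument is correct and follows a genuinely different route from the paper's. The paper first reduces to the case where $H$ is \emph{normal} in $G$ (by intersecting the conjugates of $H$), which allows it to take $L=K$; you skip this reduction and instead absorb the conjugation into the choice $L=\bigcap_i g_iKg_i^{-1}$. For (i), both proofs form $W=\mathrm{Ind}_H^G V$, but the paper bounds $\dim V^K$ by summing over \emph{all} composition factors of $W$ (there are at most $[G:H]$, since $W$ already has that length as an $H$-module), whereas your Zorn argument produces a \emph{single} simple $G$-quotient $U$ of $W$ into which $V$ embeds---a sharper conclusion with $m=1$. For (ii), both proofs use finite generation of $\mathrm{Res}_H^G U$ to obtain a simple $H$-quotient, but the paper then takes the $G$-orbit of the corresponding maximal $H$-submodule to embed $U$ into a finite direct sum of simple $H$-modules, while you invoke Frobenius reciprocity to embed $U\hookrightarrow\mathrm{Ind}_H^G V$ and compute $K$-invariants via Mackey. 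Your approach is cleaner and more functorial; the paper's is slightly more elementary (no Mackey formula) and, after the normal reduction, yields the tidier $L=K$.
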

 
%
%
%
%

\begin{proof}
	Since $[G:H]$ is finite, $H$ has only finitely
	many conjugates. Their intersection, $N$, is an open  normal subgroup
	of finite index. We may replace $K$ with $K\cap N$.
	The theorem would now follow if we establish it for
	the  pairs $(G,N)$ and $(H,N)$. It is therefore enough to prove
	the theorem when $H$ is normal in $G$.
	Under this extra assumption, we  shall see that $L=K$  
	and $n:=[G:H]$ will suffice.

\smallskip

	(i) Let $g_1,\dots,g_n$ be representatives
	for the $H$-cosets in $G$.
	Let $V_i$ be the irreducible smooth representation of $H$ obtained by giving
	$V$  the $H$-action $(h,v)\mapsto (g_ihg_i^{-1})v$ (note that
	$g_ihg_i^{-1}\in H$ since $H\lhd G$). 
	
	Consider the  (non-normalized) induction 
	\[\mathrm{Ind}_H^G(V) := \{f:G\to V\,|\, \text{$f(hg)=h\cdot f(g)$ for all $h\in H$, $g\in G$}\}\ . \]
	As usual, the group $G$ acts on $\mathrm{Ind}_H^G(V)$ by translations on the right.
	Let $V'_i$ denote the functions in $\mathrm{Ind}_H^G(V)$
	supported on $g_iH=Hg_i$. It is easy to see that $V'_i$
	is an $H$-subrepresention of $\mathrm{Ind}_H^G(V)$,
	that $\mathrm{Ind}_H^G(V)=\bigoplus_{i=1}^n V'_i$
	and that $f\mapsto f(g_i):V'_i\to V_i$
	defines an isomorphism of $H$-modules.
	Thus, the length of $\mathrm{Ind}_H^G(V)$ as a complex
	$H$-module is   $n$, and so the length of $\mathrm{Ind}_H^G(V)$ as a complex
	$G$-module is at most $n$. 
	
	Let $U_1,\dots,U_m$ denote the composition
	factors of $\mathrm{Ind}_H^G(V)$, regarded as a $G$-module.
	Then $m\leq n$, and $\sum_i\dim U_i^K=\dim\mathrm{Ind}_H^G(V)^K	\geq \dim V^K$,
	so we have proved (i).

\smallskip

	(ii) Let $U$ be an irreducible smooth representation of $G$, and let
	$g_1,\dots,g_n$ be as in (i). Since $U$ is a simple   $G$-module,
	it is finitely generated. As $[G:H]<\infty$, it follows
	that $U$ is also finitely generated as a complex $H$-module.
	Thus, there exists a maximal complex $H$-submodule $M_0\leq U$.
	Since $H$ is normal in $G$, the space $gM_0$ is  a maximal $H$-submodule for
	all $g\in G$. Let $\calM=\{gM_0\,|\,g\in G\}$. Then $\calM$ consists of 
	at most $[G:H]=n$ elements, call them $M_1,\dots,M_m$.
	Write $V_i=U/M_i$ and set $V=\bigoplus_{i=1}^m U/M_i=\bigoplus_{M\in \calM} U/M$.
	Then action of $H$ on $V$ extends to a $G$-action
	by setting $g\cdot \bigoplus_{M\in\calM}(v_M+M)=\bigoplus_{M\in\calM}(gv_{g^{-1}M}+M)$
	and the map $u\mapsto \bigoplus_{M\in\calM} (u+M):U\to V$ defines
	a nontrivial $G$-homomorphism, which must be injective since $V$ is irreducible.
	It follows that $\dim U^K\leq\dim V^K=\sum_{i=1}^m\dim V_i^K$, 
	and the proof is complete.
%
%
%
%
%
%
\end{proof} 

\begin{rem}
	The proof of part (ii) also shows that any irreducible complex $G$-module
	has   finite length as a complex $H$-module.
\end{rem}

\bibliographystyle{plain}
\bibliography{biblio}

\end{document}